\documentclass[10pt,reqno]{amsart}
\usepackage{a4wide}
\usepackage[bbgreekl]{mathbbol}
\usepackage[mathscr]{eucal}
\usepackage{amssymb}
\usepackage[all]{xy}

\usepackage[usenames]{color}

\newtheorem{prop}{Proposition}[section]

\newtheorem{cor}[prop]{Corollary}

\theoremstyle{remark}
\newtheorem{rem}[prop]{Remark}
\numberwithin{equation}{section}
\numberwithin{prop}{section}

\newcommand{\cst}{\mathrm{C^*}}
\newcommand{\tens}{\otimes}
\newcommand{\id}{\mathrm{id}}
\newcommand{\CC}{\mathbb{C}}
\newcommand{\RR}{\mathbb{R}}
\newcommand{\TT}{\mathbb{T}}
\newcommand{\ZZ}{\mathbb{Z}}
\newcommand{\hh}{\mathbb{h}}
\newcommand{\I}{\mathbb{1}}
\newcommand{\cF}{\mathcal{F}}

\newcommand{\cB}{\mathcal{B}}
\newcommand{\cA}{\mathcal{A}}
\newcommand{\comp}{\!\circ\!}
\newcommand{\vt}{\!\vartriangle\!}
\newcommand{\cJ}{\mathcal{J}}
\newcommand{\st}{\:\vline\:}
\newcommand{\Mod}{\;\mathrm{mod}\;}
\newcommand{\malpha}{\mathbb{\bbalpha}}
\newcommand{\mbeta}{\mathbb{\bbbeta}}
\newcommand{\mgamma}{\mathbb{\bbgamma}}
\newcommand{\mdelta}{\mathbb{\bbdelta}}
\newcommand{\AM}{\mathbb{A}}
\newcommand{\DM}{\mathbb{\Delta}}
\newcommand{\PM}{\mathbb{\Phi}}
\newcommand{\eM}{\mathbb{\bbespilon}}
\DeclareMathOperator{\C}{C}
\DeclareMathOperator{\Mor}{Mor}
\DeclareMathOperator{\M}{M}

\DeclareMathOperator{\Aut}{Aut}
\DeclareMathOperator{\QMap}{\mathscr{Q}-Map}
\DeclareMathOperator{\qs}{\mathscr{QS}}

\raggedbottom

\begin{document}

\title{Examples of quantum commutants}

\date{11 August, 2008}

\author{Piotr M.~So{\l}tan}
\address{Department of Mathematical Methods in Physics\\
Faculty of Physics\\
Warsaw University}
\email{piotr.soltan@fuw.edu.pl}

\thanks{Research partially supported by Polish government grants
no.~115/E-343/SPB/6.PRUE/DIE50/2005-2008 and N201 1770 33.}

\maketitle

\begin{abstract}
We describe the notion of a quantum family of maps of a quantum space and that of a quantum commutant of such a family. Quantum commutants are quantum semigroups defined by a certain universal property. We give a few examples of these objects acting on a classical $n$-point space and on the quantum space underlying the algebra of two by two matrices. We show that some of the resulting quantum semigroups are not compact quantum groups. The proof of one result touches on an interesting problem of the theory of compact quantum groups.
\end{abstract}

\section{Introduction}

The notion of a quantum space is by now a fairly well established concept {(\cite{ncg,pseu,pseudogr,podles,cqg})}. A quantum space is an object of the category dual to the category of $\cst$-algebras (\cite{pseu,gen}). If $B$ is a $\cst$-algebra then by $\qs(B)$ we will denote the ``quantum space underlying $B$''. In \cite{qs} we defined and studied \emph{quantum families of maps} which are families of maps of quantum spaces parameterized by some other quantum space (cf.~Section \ref{i2}).

In this paper we will give a few examples of the objects also introduced in \cite{qs} called \emph{quantum commutants.} These are certain compact quantum semigroups defined as universal quantum families of maps of some quantum space into itself commuting with a given family of maps. In the course of analysis of our examples, some of them will turn out to be classical objects (and in fact classical finite groups) while others will fail to be compact quantum groups.

{Despite a superficial similarity, the quantum semigroups we shall consider are \emph{not} coalgebras (except the one found in Example \eqref{examples1}). Also their actions on quantum spaces (objects dual to $\cst$-algebras) are \emph{not} coactions. Similarly compact quantum groups are \emph{not} Hopf algebras. All tensor product of $\cst$-algebras are completed in the minimal tensor product norm. For this reason we shall not use the terminology of coalgebra theory.}

Let us briefly describe the contents of the paper. In Section \ref{i2} we recall some basic notions from \cite{qs} such as quantum families of maps and quantum commutant. Section \ref{Xn} deals with the special situation when we study quantum commutants of classical families of permutations of finite sets. A general presentation of such an object is given and some examples are analyzed. In Section \ref{M2} we first describe in detail the quantum semigroup $\QMap\bigl(\qs(M_2)\bigr)$ and find explicitely the quantum commutant of a family consisting of a single automorphism $\phi$ of $M_2$. For a particular choice of $\phi$ we show that the resulting compact quantum semigroup is not a compact quantum group. To that end we use a wide array of results about compact quantum groups. Our analysis also shows that in this case the quantum commutant is different from the classical one which is a group isomorphic to $\TT\rtimes\ZZ_2$.

\section{Quantum families of maps and quantum commutants}\label{i2}

\subsection{Quantum families of maps}
\noindent
{Let $A$ and $B$ be $\cst$-algebras. By a \emph{morphism} from $A$ to $B$ we understand a nondegenerate $*$-homomorphism $A\to\M(B)$ (the multiplier algebra of $B$). The category of quantum spaces is the category dual to the category whose objects are $\cst$-algebras with these morphisms. The set of all morphisms from $A\to{B}$ is denoted by $\Mor(A,B)$. (cf.~\cite{pseu,unbo,gen}). In the remainder of the paper we shall only encounter unital $\cst$-algebras, so the morphisms will be simply unital $*$-homomorphisms between $\cst$-algebras. However, the universal properties of some of the considered quantum families of maps (in particular the quantum commutants) hold for the larger class of morphisms we just described.

Now let $M$ and $B$ be $\cst$-algebras. }
A \emph{quantum family of maps $\qs(M)\to\qs(M)$} labeled by $\qs(B)$ is an element $\Psi_B\in\Mor(M,M\tens{B})$. In \cite{qs} we showed that if $M$ is finite dimensional then there exists a $\cst$-algebra $\AM$ and a distinguished quantum family $\PM\in\Mor(M,M\tens\AM)$ such that for any $B$ and $\Psi_B\in\Mor(M,M\tens{B})$ there exists a unique $\Lambda\in\Mor(\AM,B)$ such that the diagram
\[
\xymatrix{
M\ar[rr]^-{\PM}\ar@{=}[d]&&M\tens\AM\ar[d]^{\id_M\tens\Lambda}\\
M\ar[rr]^-{\Psi_B}&&M\tens{B}}
\]
is commutative. The quantum family $\PM$ was then called the \emph{quantum family of all maps $\qs(M)\to\qs(M)$.} It was shown that there is a comultiplication $\DM$ on $\AM$ making $(\AM,\DM)$ into a compact quantum semigroup with unit {(in other words $\AM$ is a unital $\cst$-algebra and $\DM\in\Mor(\AM,\AM\tens\AM)$ is a coassociative morphism; moreover there exists on $\AM$ a continuous counit $\eM\in\Mor(A,\CC)$)}. This {quantum} semigroup was denoted by $\QMap\bigl(\qs(M)\bigr)$.

In \cite{qs} we investigated the object $\QMap\bigl(\qs(M)\bigr)$ and its subobjects. In particular we defined quantum semigroups preserving a fixed state on $M$ (\cite[Section 5]{qs}). One can use those results to prove some other general properties of $\QMap\bigl(\qs(M)\bigr)$.

\begin{prop}
If $M$ is more than one dimensional then $\QMap\bigl(\qs(M)\bigr)$ is not a compact quantum group. 
\end{prop}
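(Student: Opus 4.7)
The plan is to assume $(\AM,\DM)$ is a compact quantum group, use its Haar state to cook up a state on $M$ that is preserved by $\PM$, and then derive a contradiction by exhibiting a single quantum family that preserves no state at all.

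Concretely, let $h$ denote the Haar state of the hypothetical compact quantum group $(\AM,\DM)$ and set $E=(\id_M\tens h)\comp\PM\colon M\to M$, a unital completely positive map. The defining relation $(\PM\tens\id)\PM=(\id\tens\DM)\PM$ together with the left-invariance $(h\tens\id)\DM=h(\cdot)\I$ of the Haar state yields
\[
(E\tens\id)\PM(m)\;=\;E(m)\tens\I\qquad\text{in }M\tens\AM.
\]
Picking any state $\varphi$ on $M$ and setting $\omega=\varphi\comp E$, which is then a state on $M$, and applying $\varphi\tens\id$ to the displayed identity gives $(\omega\tens\id)\PM(m)=\omega(m)\I$ for all $m\in M$; in other words, $\omega$ is preserved by $\PM$.

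Next I would use the universal property of $\PM$ to propagate this invariance to every quantum family. Any $\Psi\in\Mor(M,M\tens B)$ factors as $\Psi=(\id_M\tens\Lambda)\PM$ for a unique $\Lambda\in\Mor(\AM,B)$, so applying $\Lambda$ to the $\AM$-factor of $(\omega\tens\id)\PM(m)=\omega(m)\I$ gives $(\omega\tens\id_B)\Psi(m)=\omega(m)\I$ for every $m\in M$. Hence the assumption that $(\AM,\DM)$ is a compact quantum group would force the existence of a single state $\omega$ on $M$ preserved by every quantum family of maps $\qs(M)\to\qs(M)$.

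Finally, to reach a contradiction when $\dim M>1$, I would exhibit one quantum family that preserves no state. Take $B=M$ and $\Psi\in\Mor(M,M\tens M)$ defined by $\Psi(m)=\I\tens m$. Then for any state $\omega$ on $M$ one has $(\omega\tens\id)\Psi(m)=\omega(\I)m=m$, so preservation of $\omega$ would read $m=\omega(m)\I$ for every $m\in M$, which is false as soon as $M$ has dimension larger than one. I expect the only mildly delicate step to be the averaging identity in the first paragraph: it is the standard computation showing that $E$ lands in the fixed-point subalgebra of the coaction $\PM$, but one must keep track of which leg of $\PM\tens\id$ the Haar state is applied to when invoking coassociativity. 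The real content of the proof is the contrast between compact quantum group coactions, which always admit invariant states built by Haar-averaging, and the universal quantum family $\PM$, through which even the trivial inclusion $m\mapsto\I\tens m$ must factor.
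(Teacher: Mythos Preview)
Your proof is correct and follows the same overall strategy as the paper: assume a Haar state $h$ exists, average along $\PM$ to manufacture a $\PM$-invariant state on $M$, and contradict the fact that no such state exists when $\dim M>1$. The execution differs in two places. First, the paper establishes $\PM\bigl(E(m)\bigr)=E(m)\tens\I$ and then invokes ergodicity of the action (\cite[Proposition~4.6]{qs}) to conclude that $E(m)$ is a scalar, thereby \emph{defining} the state $\omega$ via $E(m)=\omega(m)\I_M$; you sidestep ergodicity entirely by composing $E$ with an arbitrary state $\varphi$ on $M$, which is a cleaner route to the same invariant state. Second, for the nonexistence of an invariant state the paper simply cites \cite[Proposition~5.3]{qs}, whereas you prove it inline: the universal property of $\PM$ propagates invariance of $\omega$ to every quantum family, and the explicit family $\Psi(m)=\I\tens m$ visibly has no invariant state. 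Your argument is thus self-contained where the paper's leans on two results from the companion paper; conversely, the paper's version makes explicit the role of ergodicity, which is conceptually informative even if not logically necessary here.
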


\begin{proof}
If $\QMap\bigl(\qs(M)\bigr)$ were a compact quantum group, it would have the Haar measure $\hh$. The standard formula (cf.~\cite{boca,podles})
\[
E:M\ni{m}\longmapsto(\id_M\tens\hh)\PM(m)\in{M}
\]\sloppy
yields a conditional expectation whose range consist of elements invariant for the action of $\QMap\bigl(\qs(M)\bigr)$ on $\qs(M)$:
\[
\begin{split}
\PM\bigl(E(m)\bigr)&=\PM\bigl((\id_M\tens\hh)\PM(m)\bigr)\\
&=(\id_M\tens\id_\AM\tens\hh)\bigl((\PM\tens\id_\AM)\PM(m)\bigr)\\
&=(\id_M\tens\id_\AM\tens\hh)\bigl((\id_M\tens\DM)\PM(m)\bigr)\\
&=\bigl(\id_M\tens[(\id_\AM\tens\hh)\comp\DM]\bigr)\PM(m)\\
&=\bigl[(\id_M\tens\hh)\PM(m)\bigr]\tens\I_\AM=E(m)\tens\I_\AM.
\end{split}
\]
We have shown, however, in \cite[Proposition 4.6]{qs} that $\QMap\bigl(\qs(M)\bigr)$ acts ergodically on $\qs(M)$. In other words $E(m)$ must be a multiple of $\I_M$. Therefore the formula
\[
E(m)=\omega(m)\I_M
\]
defines an invariant state on $M$. Indeed, for any $m\in{M}$ we have
\[
\begin{split}
\I_M\tens\bigl[(\omega\tens\id_\AM)\PM(m)\bigr]&=(E\tens\id_\AM)\PM(m)\\
&=(\id_M\tens\hh\tens\id_\AM)(\PM\tens\id_\AM)\PM(m)\\
&=(\id_M\tens\hh\tens\id_\AM)(\id_M\tens\DM)\PM(m)\\
&=\bigl[(\id_M\tens\hh)\PM(m)\bigr]\tens\I_\AM\\
&=E(m)\tens\I_\AM=\omega(m)\I_M\tens\I_\AM=\I_M\tens\omega(m)\I_\AM.
\end{split}
\]
Therefore $(\omega\tens\id_\AM)\PM(m)=\omega(m)\I_\AM$ for all $m\in{M}$. Now \cite[Proposition 5.3]{qs} says that there is no invariant state on $M$ unless $\dim{M}=1$.
\end{proof}

Let $M$ be a $\cst$-algebra and let $\Psi_B\in\Mor(M,M\tens{B})$ and $\Psi_C\in\Mor(M,M\tens{C})$ be quantum families of maps $\qs(M)\to\qs(M)$. The \emph{composition} of the quantum families $\Psi_B$ and $\Psi_C$ is the quantum family labeled by $\qs(B\tens{C})$ defined as $(\Psi_B\tens\id_C)\comp\Psi_C$. The composition of $\Psi_B$ and $\Psi_C$ is denoted by $\Psi_B\vt\Psi_C$ (\cite[Section 3]{qs}).

\subsection{Quantum commutants}

Let $B$ be a $\cst$-algebra and let $\Psi_B\in\Mor(M,M\tens{B})$ be a quantum family of maps. The quantum commutant $\QMap_{\Psi_B}\bigl(\qs(M)\bigr)$ of $\Psi_B$ is the quantum semigroup $(A,\Delta)$ where the quantum space $\qs(A)$ labels the universal quantum family $\Phi\in\Mor(M,M\tens{A})$ of maps commuting with $\Psi_B$. The notion of commuting families was introduced in \cite[Section 6]{qs} and is a generalization of a classical notion of commuting families of maps.

Let us recall some of the features of  quantum commutants proved in \cite{qs}:
\begin{itemize}
\item the $\cst$-algebra $A$ is the quotient $\AM/\cJ$, where $\cJ$ is the ideal in $\AM$ generated by the set
\begin{equation}\label{K}
\Bigl\{(\omega\tens\id_\AM\tens\eta)(\PM\vt\Psi_B)(m)
-(\omega\tens\eta\tens\id_\AM)(\Psi_B\vt\PM)(m)
\st{m}\in{M},\:\omega\in{M^*},\:\eta\in{B^*}\Bigr\},
\end{equation}
\item The quantum family $\Phi\in\Mor(M,M\tens{A})$ satisfies $\Phi=(\id_M\tens\pi)\PM$, where $\pi$ is the quotient map $\AM\to{A}$. Moreover $\Phi$ is an action of $\QMap_{\Psi_M}\bigl(\qs(M)\bigr)$ on $\qs(M)$.
\item $\QMap_{\Psi_B}\bigl(\qs(M)\bigr)$ is a quantum subsemigroup with unit of $\QMap\bigl(\qs(M)\bigr)$, i.e.~the quotient map $\pi$ is a quantum semigroup morphism and {
\begin{equation}\label{epie}
\varepsilon\comp\pi=\eM,
\end{equation}
}
where {$\varepsilon$} is the counit of $\QMap_{\Psi_B}\bigl(\qs(M)\bigr)$.
\end{itemize}
In order to illustrate the notion of a quantum commutant further let us consider the following simple example: a quantum family $\Psi_B\in\Mor(M,M\tens{B})$ of maps $\qs(M)\to\qs(M)$ is \emph{trivial} if $\Psi_B(m)=m\tens\I_B$ for all $m\in{M}$. In \cite[Proposition 6.1]{qs} we showed that any quantum family of maps commutes with a trivial family. Therefore (by universality) the quantum commutant of a trivial family must be the whole quantum semigroup $\QMap\bigl(\qs(M)\bigr)$.

\subsubsection*{Notational convention}

In each of the next two sections we will fix a finite dimensional $\cst$-algebra $M$ and first consider the quantum semigroup $\QMap\bigl(\qs(M)\bigr)$. In each case we will denote the corresponding $\cst$-algebra with comultiplication by $(\AM,\DM)$, its counit by $\eM$ and the associated action on $\qs(M)$ by $\PM$. Then we will study quantum commutants of some fixed family of maps $\qs(M)\to\qs(M)$ usually denoted by $\Psi_B\in\Mor(M,M\tens{B})$ with some $\cst$-algebra $B$. The $\cst$-algebra with comultiplication describing the quantum commutant $\QMap_{\Psi_B}\bigl(\qs(M)\bigr)$ will then be denoted by $(A,\Delta)$ and its counit will always be denoted by {$\varepsilon$}, while its action on $\qs(M)$ will be denoted by $\Phi\in\Mor(M,M\tens{A})$. {In other words the passage from $\QMap\bigl(\qs(M)\bigr)$ to $\QMap_{\Psi_B}\bigl(\qs(M)\bigr)$ will be symbolically encoded in the following transformation of notation:
\[
\xymatrix{\bigl(\AM,\DM,\PM,\eM\bigr)\ar@{|~>}[r]&\bigl(A,\Delta,\Phi,\varepsilon\bigr).}
\]
}

\section{Quantum commutants of classical families of bijections}\label{Xn}

Let $X_n$ denote an $n$ element set. This finite space is described by the commutative $\cst$-algebra $\C(X_n)=\CC^n$. In other words $X_n=\qs(\CC^n)$. In this section we shall give a description of quantum commutants of classical families of bijections $X_n\to{X_n}$.

\begin{prop}\label{propWangR}
Let $M=\CC^n$. Then $\QMap(X_n)=(\AM,\DM)$, where $\AM$ is the universal $\cst$-algebra generated by elements $\{a_{i,j}\st{1\leq{i,j}}\leq{n}\}$ with relations
\begin{equation}\label{WangR}
\begin{aligned}
(a_{ij})^*(a_{ij})&=a_{ij},&i,j&=1,\ldots,n,\\
\sum_{j=1}^na_{ij}&=\I_\AM,&i&=1,\ldots,n
\end{aligned}
\end{equation}
and comultiplication $\DM\in\Mor(\AM,\AM\tens\AM)$
\begin{align}
\DM(a_{i,j})&=\sum_{k=1}^na_{i,k}\tens{a_{k,j}},&i,j&=1,\ldots,n,\nonumber
\end{align}
while the counit $\eM$ maps $a_{i,j}$ to $1$ if $i=j$ and to $0$ otherwise.

The action $\PM\in\Mor(M,M\tens\AM)$ of $\QMap(X_n)$ on $X_n$ is given on the standard basis $\{e_1,\ldots,e_n\}$ of $M=\CC^n$ by
\[
\PM(e_j)=\sum_{i=1}^ne_i\tens{a_{i,j}}.
\]
\end{prop}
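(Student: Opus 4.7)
The plan is to show directly that the pair $(\AM, \PM)$ defined in the statement satisfies the universal property of $\QMap(X_n)$ recalled in Section~\ref{i2}, and then to recover the formulas for $\DM$ and $\eM$ by applying that universality to $\PM \vt \PM$ and to $\id_M$ respectively. First I would note that the relation $(a_{ij})^* a_{ij} = a_{ij}$ is equivalent to $a_{ij}$ being a self-adjoint projection: taking adjoints yields $a_{ij}^* = a_{ij}$, whence $a_{ij}^2 = a_{ij}$. In particular $\|a_{ij}\| \leq 1$, so the universal unital $\cst$-algebra $\AM$ on these generators and relations exists. A key intermediate observation is that, for each fixed $i$, the relation $\sum_j a_{ij} = \I_\AM$ together with the projection property automatically forces pairwise orthogonality along the row: for $j \neq k$ one has $a_{ik} \leq \I_\AM - a_{ij}$, which forces $a_{ij} a_{ik} = 0$ by the standard fact that projections $p, q$ with $p \leq \I - q$ are orthogonal. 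This orthogonality is not listed among the stated relations precisely because it follows from them.

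Next I would verify that $\PM(e_j) = \sum_i e_i \tens a_{ij}$ extends to a unital $*$-homomorphism $M \to M \tens \AM$. Since $M = \CC^n$ is presented as a unital $*$-algebra by the minimal projections $e_j$ modulo $e_j e_k = \delta_{jk} e_j$, $e_j^* = e_j$, and $\sum_j e_j = \I_M$, it suffices to verify the same relations for $\PM(e_j)$ inside $M \tens \AM$. Self-adjointness and the unital property are immediate, while $\PM(e_j)\PM(e_k) = \sum_i e_i \tens a_{ij} a_{ik}$ reduces to $\delta_{jk} \PM(e_j)$ by the row-orthogonality just established. Universality is then straightforward: given any $\Psi_B \in \Mor(M, M \tens B)$, expansion $\Psi_B(e_j) = \sum_i e_i \tens b_{ij}$ together with the assumption that $\Psi_B$ is a unital $*$-homomorphism forces the elements $b_{ij} \in B$ to satisfy the defining relations of $\AM$; hence $a_{ij} \mapsto b_{ij}$ extends uniquely to $\Lambda \in \Mor(\AM, B)$, and by construction $(\id_M \tens \Lambda) \PM = \Psi_B$.

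With $(\AM, \PM)$ identified as $\QMap(X_n)$, the stated formulas for $\DM$ and $\eM$ drop out by applying universality once more. The comultiplication is uniquely characterized by $(\id_M \tens \DM)\PM = (\PM \tens \id_\AM)\PM$; expanding the right-hand side as $\sum_{i,k} e_i \tens a_{ik} \tens a_{kj}$ and matching coefficients of $e_i$ yields $\DM(a_{ij}) = \sum_k a_{ik} \tens a_{kj}$. Similarly, the counit is characterized by $(\id_M \tens \eM)\PM = \id_M$, which forces $\eM(a_{ij}) = \delta_{ij}$. The only mildly subtle point, and the one I would flag as the crux, is the row-orthogonality observation $a_{ij} a_{ik} = 0$: without it, the proposition's minimal list of relations would not suffice to make $\PM$ a $*$-homomorphism, and everything else is bookkeeping against the universal property established in \cite{qs}.
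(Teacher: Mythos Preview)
Your proof is correct and follows precisely the strategy the paper indicates: since the paper does not spell out a proof of this proposition but refers to the analogous Proposition~\ref{factsA} for $M_2$, your argument mirrors that template---write $\PM$ in coordinates, read off the relations from the $*$-algebra presentation of $M$, verify universality, and deduce $\DM$ and $\eM$ from their defining identities. Your explicit remark that the row-orthogonality $a_{ij}a_{ik}=0$ is a \emph{consequence} of the listed relations (projections summing to $\I$) rather than an additional axiom is a useful clarification that the paper leaves implicit.
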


Proposition \ref{propWangR} is simple to prove and is implicitely contained in \cite[Theorem 3.1 \& remark (3) on page 208]{wang}. In section \ref{M2} we will give a proof of a similar result (Proposition \ref{factsA}).

Let $\cF$ be a classical family of maps $X_n\to{X_n}$. We will only consider finite families, so that $\cF$ consists of a finite number, say $m$, elements and we can view $\cF$ as a morphism $\Psi_B\in\Mor(M,M\tens{B})$, where $B=\C(\cF)=\CC^m$ and
\[
\Psi_B(e_j)=\sum_{\sigma\in\cF}e_{\sigma(j)}\tens\delta_\sigma,
\]
where for each $\sigma\in\cF$ the symbol $\delta_\sigma$ denotes a function on the set $\cF$ equal to $1$ at $\sigma$ and zero in all other points. We shall now describe the construction of a quantum commutant of the family $\Psi_B$. With a small abuse of terminology we shall call this quantum semigroup the quantum commutant of $\cF$ and denote it by $\QMap_\cF(X_n)$. The corresponding $\cst$-algebra with comultiplication will be denoted by $(A,\Delta)$.

\begin{prop}\label{sigsig}
Let $M=\C(X_n)$ and let $\cF$ be a classical family of bijections $X_n\to{X_n}$. Let $B=\CC^{|\cF|}$ and let $\Psi_B\in\Mor(M,M\tens{B})$ correspond to the family $\cF$ as described above. Then the ideal \eqref{K} of $\AM$ is generated by the elements
\[
\bigl\{a_{i,\sigma(j)}-a_{\sigma^{-1}(i),j}\st\:1\leq{i,j}\leq{n},\:\sigma\in\cF\bigr\}.
\]
Consequently $A$ is generated by elements $\{a_{ij}\st\;i,j=1,\ldots,n\}$ satisfying \eqref{WangR} and
\begin{equation}\label{ss}
a_{\sigma(i),\sigma(j)}=a_{i,j},\qquad{i,j=1,\ldots,n,}\quad\sigma\in\cF.
\end{equation}
Moreover the commutant of $\cF$ coincides with the commutant of the group generated by $\cF$.
\end{prop}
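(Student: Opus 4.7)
The plan is to compute the generators of the ideal \eqref{K} explicitly in terms of the matrix elements $a_{i,j}$ supplied by Proposition~\ref{propWangR}, then identify the resulting quotient with the $\cst$-algebra cut out by the relations \eqref{ss} via a single reindexing, and finally exploit the symmetry of those relations to pass from $\cF$ to $\langle\cF\rangle$.

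First I would fix convenient bases for the functionals in \eqref{K}: let $\omega_i\in{M^*}$ be $\omega_i(e_k)=\delta_{i,k}$, and for each $\sigma\in\cF$ let $\eta_\sigma\in{B^*}$ be $\eta_\sigma(\delta_{\sigma'})=\delta_{\sigma,\sigma'}$. These span $M^*$ and $B^*$, so it is enough to evaluate \eqref{K} on pairs $(\omega_i,\eta_\sigma)$ and on the basis vectors $e_j$ of $M$. Unfolding the definition of composition of quantum families on $e_j$ gives
\[
(\PM\vt\Psi_B)(e_j)=\sum_{\sigma\in\cF}\sum_{i=1}^n e_i\tens a_{i,\sigma(j)}\tens\delta_\sigma,
\]
and, after relabeling $k=\sigma(i)$,
\[
(\Psi_B\vt\PM)(e_j)=\sum_{\sigma\in\cF}\sum_{k=1}^n e_k\tens\delta_\sigma\tens a_{\sigma^{-1}(k),j}.
\]
Applying the functionals in the two orders prescribed by \eqref{K} then picks out $a_{i,\sigma(j)}$ from the first expression and $a_{\sigma^{-1}(i),j}$ from the second, so $\cJ$ is generated by the displayed elements $a_{i,\sigma(j)}-a_{\sigma^{-1}(i),j}$.

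To rewrite these in the symmetric form \eqref{ss}, I would substitute $i\mapsto\sigma(i)$ in $a_{i,\sigma(j)}=a_{\sigma^{-1}(i),j}$ to obtain $a_{\sigma(i),\sigma(j)}=a_{i,j}$; the reverse substitution recovers the original form, so the two families of relations generate the same ideal in $\AM$. For the last assertion, observe that if \eqref{ss} holds for $\sigma$ and for $\tau$, then
\[
a_{\sigma\tau(i),\sigma\tau(j)}=a_{\tau(i),\tau(j)}=a_{i,j},
\]
and applying \eqref{ss} to the pair $\sigma^{-1}(i),\sigma^{-1}(j)$ yields $a_{i,j}=a_{\sigma^{-1}(i),\sigma^{-1}(j)}$. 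Hence the set of permutations for which \eqref{ss} is imposed in $A$ is a subgroup of the symmetric group containing $\cF$, so it contains $\langle\cF\rangle$, and the quantum commutant of $\cF$ coincides with that of the group it generates.

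I do not expect a real obstacle here: once the dualizing functionals are chosen, the derivation of the generators of $\cJ$ is routine bookkeeping, and the closure argument in the last step is elementary. The only point that demands care is the order of tensor factors, which is what produces $\sigma$ on one side of the generator of $\cJ$ and $\sigma^{-1}$ on the other; this asymmetry disappears as soon as one passes to the equivalent relations \eqref{ss}.
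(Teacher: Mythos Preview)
Your proof is correct and follows essentially the same route as the paper: choose the obvious dual bases, compute both compositions on $e_j$, read off the generators $a_{i,\sigma(j)}-a_{\sigma^{-1}(i),j}$, reindex to obtain \eqref{ss}, and check closure under products and inverses. The only cosmetic difference is that for inverses you argue directly by substituting $\sigma^{-1}(i),\sigma^{-1}(j)$ into \eqref{ss}, whereas the paper remarks that in a finite permutation group every inverse is already a power; both arguments are immediate.
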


\begin{proof}
In order to find a set of generators of the ideal \eqref{K} it is enough to take for $m$, $\omega$ and $\eta$ elements of fixed bases of $M$, $M^*$ and $B^*$ respectively. Fix $i\in\{1,\ldots,n\}$ and $\sigma\in\cF$ and let $\omega$ be equal to $1$ on $e_i$ and to $0$ on other vectors of the standard basis of $M$. Similarly let $\eta$ pick out $\delta_\sigma$ and kill all other $\delta_\tau$ with $\tau\in\cF$. We have
\[
(\PM\vt\Psi_B)(e_j)=(\PM\tens\id_B)\biggl(\sum_{\tau\in\cF}e_{\tau(j)}\tens\delta_\tau\biggr)=
\sum_{k=1}^n\sum_{\tau\in\cF}e_k\tens{a_{k,\tau(j)}}\tens\delta_\tau
\]
and
\[
(\Psi_B\vt\PM)(e_j)=(\Psi_B\tens\id_\AM)\biggl(\sum_{k=1}^ne_k\tens{a_{k,j}}\biggr)=
\sum_{\tau\in\cF}\sum_{k=1}^ne_{\tau(k)}\tens\delta_\tau\tens{a_{k,j}}.
\]
Using the fact that all elements of $\cF$ are bijections we see that
\[
(\omega\tens\id_\AM\tens\eta)(\PM\vt\Psi_B)(e_j)
-(\omega\tens\eta\tens\id_\AM)(\Psi_B\vt\PM)(e_j)
=a_{i,\sigma(j)}-a_{\sigma^{-1}(i),j}.
\]
Substituting $\sigma(i)$ in place of $i$ we  obtain the relation \eqref{ss} for generators of $A$. Moreover We see that if $\sigma_1,\sigma,_2\in\cF$, then
\[
R_{i,j}=a_{\sigma_1(i),\sigma_1(j)}-a_{i,j}\quad\text{and}\quad
S_{i,j}=a_{\sigma_2(i),\sigma_2(j)}-a_{i,j}
\]
are zero in $A$ for all $i,j$. Therefore
\[
a_{\sigma_1(\sigma_2(i)),\sigma_1(\sigma_2(j))}-a_{i,j}=
R_{\sigma_2(i),\sigma_2(j)}-S_{i,j}
\]
is also equal to $0$ in $A$. It follows that taking compositions of elements of $\cF$ does not enlarge the commutant. One could follow a similar reasoning with inverses, but it is enough to note that inverses in a finite group (in our case contained in the symmetric group $S_n$) are expressible as products (namely a high enough power of a given element is its inverse).
\end{proof}

\subsubsection{Example}\label{examples1}
For $n\geq{3}$ let us consider the family of maps $X_n\to{X_n}$ consisting of a single cyclic permutation $i\mapsto{i+1}\Mod{n}$. The quantum commutant $\QMap_\cF(X_n)$ is then isomorphic to the classical group $\ZZ_n$ acting on $X_n$ by cyclic permutations. Indeed, let $\pi:\AM\to{A}$ be the quotient map. Then $A$ is generated by images of generators $a_{i,j}$ of $\AM$. For $k=1,\ldots,n$ let $x_k=\pi(a_{1,k})$. Then we easily see that $x_1,\ldots,x_n$ generate $A$ and they are self-adjoint projections adding up to $\I_A$. Therefore they commute and we see that $A$ is isomorphic to $\CC_n$. Moreover, since $\pi$ preserves comultiplication, we find that
\[
\Delta(x_k)=\sum_{p=1}^nx_p\tens{x_{k-p+1\Mod{n}}}
\]
One easily sees that $(A,\Delta)$ is isomorphic to the algebra of all functions on $\ZZ_n$ with $x_k$ mapped to the ``delta function'' at $k-1\in\ZZ_n$. In particular this quantum commutant is not only a quantum group, but even a classical (and commutative) finite group. One can show by direct calculation that for $n=2$ the quantum commutant of a cyclic permutation (which in this case is also a transposition) is the trivial group.

\subsubsection{Example}
\noindent{{
Let us now specify the situation from Example \ref{examples1}. Let $\cF$ be a family consisting of a single map $\sigma:X_n\to{X_n}$ exchanging two points (a transposition). By putting $X_n=\{1,\ldots,n\}$ we can assume that $\sigma=(n-1,n)$. We shall describe the quantum commutant $\QMap_\cF(X_n)=(A,\Delta)$ of $\cF$.

We have the quotient map $\pi:\AM\to{A}$, so images under $\pi$ of generators $(a_{i,j})_{i,j=1,\ldots,n}$ of $\AM$ (cf.~Proposition \ref{propWangR}) generate $A$. Let us denote the image of the matrix $(a_{i,j})_{i,j=1,\ldots,n}\in{M_n\tens\AM}$ under $\id_{M_n}\tens\pi$ by
\[
\begin{bmatrix}
b_{1,1}&\cdots&b_{1,n-2}&d_1&d_1'\\
\vdots&\ddots&\vdots&\vdots\\
b_{n-2,1}&\cdots&b_{n-2,n-2}&d_{n-2}&d_{n-2}'\\
c_1&\cdots&c_{n-2}&e&f\\
c_1'&\cdots&c_{n-2}'&f'&e'
\end{bmatrix}.
\]
First let us remark that it follows from Proposition \ref{sigsig} that 
$e=e'$, $f=f'$ and $c_j=c_j'$, $d_j=d_j'$ for $j=1,\ldots,n-2$. Moreover the defining relations of $\AM$ imply that $d_j=0$ for all $j$. The action $\Phi\in\Mor(\CC^n,\CC^n\tens{A})$ is given on generators by
\[
\Phi(e_j)=\sum_{i=1}^{n-2}e_i\tens{b_{i,j}}+(e_{n-1}+e_n)\tens{c_j}
\]
for $1\leq{j}\leq{n-2}$ and
\[
\begin{split}
\Phi(e_{n-1})&=e_{n-1}\tens{e}+e_n\tens{f},\\
\Phi(e_n)&=e_{n-1}\tens{f}+e_n\tens{e}.
\end{split}
\]

Let $B$ be the $\cst$-subalgebra of $A$ generated by $(b_{i,j})_{i,j=1,\ldots,n-2}$ and $C$ be the $\cst$-subalgebra generated by $e,f,c_1,\ldots,c_{n-2}$. Clearly $C$ is isomorphic to $\CC^n$. Note also that $\bigl.\Delta\bigr|_B\in\Mor(B,B\tens{B})$. We will show that
\begin{enumerate}
\item\label{pa} $(B,\Delta)$ is isomorphic to the quantum semigroup $\QMap(X_{n-2})$,
\item\label{pb} $A\cong{B*C}$.
\end{enumerate}
To see point \eqref{pa} let $D$ be a $\cst$-algebra and let $\Psi_D\in\Mor(\CC^{n-2},\CC^{n-2}\tens{D})$ be a quantum family of maps $X_{n-2}\to{X_{n-2}}$. Define
$\widetilde{\Psi}_D\in\Mor(\CC^n,\CC^n\tens{D})$ by 
\[
\widetilde{\Psi}_D(e_j)=(\iota\tens\id_D)\Psi_D(e_j)
\]
(where $\iota$ is the inclusion of $\CC^{n-2}$ into $\CC^n$ onto the subspace spanned by the first $n-2$ vectors of the standard basis) for $j=1,\ldots,n-2$ and
\[
\begin{split}
\widetilde{\Psi}_D(e_k)=e_k\tens\I_D
\end{split}
\]
for $k=n-1,n$. Then $\widetilde{\Psi}_D$ is a quantum family of maps $X_n\to{X_n}$ commuting with $\cF$. The universal property of $\QMap_\cF(X_n)$ shows that there exists a unique map $\Lambda\in\Mor(A,D)$ such that $(\id_{\CC^n}\tens\Lambda)\comp\Phi=\widetilde{\Psi}_D$.

Let now $p:\CC^n\to\CC^{n-2}$ be the projection onto the first $n-2$ coordinates and let $\Phi'=(p\tens\id_A)\comp\Phi\comp\iota:\CC^{n-2}\to\CC^{n-2}\tens{B}$. As defined here $\Phi'$ is a completely positive map, but one immediately sees that it is in fact a unital $*$-homomorphism.

Evidently $\Lambda'=\bigl.\Lambda\bigr|_{B}$ is now a morphism from $B$ to $D$ such that
$\Psi_D=(\id_B\tens\Lambda')\comp\Phi'$. Moreover this morphism is unique (if there were different ones we could easily extend them both to $A$ and have different choices for $\Lambda$). It follows that $(B,\Phi')$ has the universal property of $\QMap(X_{n-2})$.

To prove \eqref{pb} one first uses the universal property of the free product to construct a map $B*C\to{A}$. Namely, let 
\begin{equation}\label{ge}
\bigl(\widetilde{b}_{i,j}\bigr)_{i,j=1,\ldots,n-2}\quad\text{and}
\quad\widetilde{e},\ \widetilde{f},\  \widetilde{c}_1,\ldots,\widetilde{c}_{n-2}
\end{equation}
be images of the generators of $B$ and $C$ in $B*C$. Then there is a morphism $\Theta\in\Mor(B*C,A)$ sending each of the generators \eqref{ge} to the corresponding generator in $A$. To define a map in the opposite direction let $\widetilde{\Phi}\in\Mor\bigl(\CC^n,\CC^n\tens(B*C)\bigr)$ be given by
\[
\widetilde{\Phi}(e_j)=\sum_{i=1}^{n-2}e_i\tens{\widetilde{b}_{i,j}}
+(e_{n-1}+e_n)\tens{\widetilde{c}_j}
\]
for $1\leq{j}\leq{n-2}$ and
\[
\begin{split}
\widetilde{\Phi}(e_{n-1})&=e_{n-1}\tens\widetilde{e}+e_n\tens\widetilde{f},\\
\widetilde{\Phi}(e_n)&=e_{n-1}\tens\widetilde{f}+e_n\tens\widetilde{e}
\end{split}
\]
(one has to check that this map exists, i.e.~the images of the basis vectors are projections summing up to $\I$). Then $\widetilde{\Phi}$ is a quantum family of maps $X_n\to{X_n}$ commuting with $\cF$ and the universal property of $\QMap_\cF(X_n)$ provides an element of $\Lambda\in\Mor(A,B*C)$ such that
$(\id_{\CC^n}\tens\Lambda)\comp\Phi=\widetilde{\Phi}$. By looking at the images of generators one can check that $\Lambda$ is then the inverse of $\Theta$.
}}

\section{An example on $M_2$}\label{M2}

In this section we shall consider the finite quantum space $\qs(M_2)$. We begin by describing the quantum space of all maps $\qs(M_2)\to\qs(M_2)$.

\begin{prop}\label{factsA}
The quantum space $\QMap\bigl(\qs(M)\bigr)=(\AM,\DM)$, where $\AM$ is the universal $\cst$-algebra generated by four elements $\malpha,\mbeta,\mgamma$ and $\mdelta$ satisfying the relations:
\begin{equation}\label{nstn}
\begin{split}
\malpha^*\malpha+\mgamma^*\mgamma+\malpha\malpha^*+\mbeta\mbeta^*&=\I,\\
\malpha^*\mbeta+\mgamma^*\mdelta+\malpha\mgamma^*+\mbeta\mdelta^*&=0,\\
\mbeta^*\mbeta+\mdelta^*\mdelta+\mgamma\mgamma^*+\mdelta\mdelta^*&=\I
\end{split}
\end{equation}
and
\begin{equation}\label{nkw}
\begin{split}
\malpha^2+\mbeta\mgamma&=0,\\
\malpha\mbeta+\mbeta\mdelta&=0,\\
\mgamma\malpha+\mdelta\mgamma&=0,\\
\mgamma\mbeta+\mdelta^2&=0.
\end{split}
\end{equation}
The quantum semigroup structure on $\QMap\bigl(\qs(M_2)\bigr)$ is given by $\DM\in\Mor(\AM,\AM\tens\AM)$ acting on generators in the following way:
\begin{equation}\label{Delta}
\begin{split}
\DM(\malpha)&=
\malpha\malpha^*\tens\malpha+\mbeta\mbeta^*\tens\malpha
+\malpha\tens\mbeta
+\malpha^*\tens\mgamma
+\malpha^*\malpha\tens\mdelta+\mgamma^*\mgamma\tens\mdelta,\\
\DM(\mbeta)&=\malpha\mgamma^*\tens\malpha+\mbeta\mdelta^*\tens\malpha
+\mbeta\tens\mbeta
+\mgamma^*\tens\mgamma
+\malpha^*\mbeta\tens\mdelta+\mgamma^*\mdelta\tens\mdelta,\\
\DM(\mgamma)&=\mgamma\malpha^*\tens\malpha+\mdelta\mbeta^*\tens\malpha
+\mgamma\tens\mbeta
+\mbeta^*\tens\mgamma
+\mbeta^*\malpha\tens\mdelta+\mdelta^*\mgamma\tens\mdelta,\\
\DM(\mdelta)&=\mgamma\mgamma^*\tens\malpha+\mdelta\mdelta^*\tens\malpha
+\mdelta\tens\mbeta
+\mdelta^*\tens\mgamma
+\mbeta^*\mbeta\tens\mdelta+\mdelta^*\mdelta\tens\mdelta
\end{split}
\end{equation}
while the counit {$\eM$} maps $\malpha,\mgamma$ and $\mdelta$ to $0$ and $\mbeta$ to $1$.

The action $\PM\in\Mor(M_2,M_2\tens\AM)$ of $\QMap\bigl(\qs(M_2)\bigr)$ on $\qs(M_2)$ is given by
\begin{equation}\label{Phibaza}
\PM(n)=nn^*\tens\malpha+n\tens\mbeta+n^*\tens\mgamma+n^*n\tens\mdelta.
\end{equation}
\end{prop}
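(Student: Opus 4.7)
My approach exploits a short one-generator presentation of $M_2$. The four elements $nn^*, n, n^*, n^*n$ (with $n = e_{12}$) form a linear basis of $M_2$, and I first observe that $M_2$ is the universal unital $\cst$-algebra generated by a single element $n$ subject to $n^2 = 0$ and $nn^* + n^*n = \I$. Indeed, setting $p = nn^*$, $q = n^*n$, these relations force $p, q$ to be orthogonal projections summing to $\I$, with $pn = nq = n$ and $np = qn = 0$, so the $*$-algebra generated by $n$ and $\I$ has dimension at most $4$; since $\dim M_2 = 4$ and a finite-dimensional $\cst$-algebra admits a unique $\cst$-norm, the universal object is $M_2$ itself.

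Given this presentation, a morphism $\Psi_B \in \Mor(M_2, M_2 \tens B)$ corresponds bijectively to an element $x \in M_2 \tens B$ satisfying $x^2 = 0$ and $xx^* + x^*x = \I$. Writing $x$ uniquely in the basis as
\[
x = nn^* \tens \malpha + n \tens \mbeta + n^* \tens \mgamma + n^*n \tens \mdelta,
\]
a direct expansion using the multiplication table of $\{nn^*, n, n^*, n^*n\}$ unfolds $x^2 = 0$ into the four relations \eqref{nkw} and $xx^* + x^*x = \I$ into the three relations \eqref{nstn} (the coefficients of $n$ and $n^*$ are mutually adjoint, yielding one independent equation). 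The bound $\|x\| \leq 1$ follows from $xx^*$, $x^*x \geq 0$ and $xx^* + x^*x = \I$, which bounds each coefficient, so the universal $\cst$-algebra $\AM$ on $\malpha, \mbeta, \mgamma, \mdelta$ with these relations exists. Setting $\PM(n)$ equal to the universal such $x$ establishes \eqref{Phibaza} and, by the two universal properties at play, the universal property of $\PM$ required in the definition of $\QMap\bigl(\qs(M_2)\bigr)$.

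For the comultiplication and counit I use the standard principle that $\DM$ encodes composition of quantum families: $\DM$ is the unique morphism $\AM \to \AM \tens \AM$ satisfying $(\id_{M_2} \tens \DM) \comp \PM = (\PM \tens \id_\AM) \comp \PM$. Expanding the right-hand side requires $\PM(nn^*) = \PM(n)\PM(n)^*$ and $\PM(n^*n) = \PM(n)^* \PM(n)$, both computed by the same basis expansion used above; reading off the coefficients of $nn^*, n, n^*, n^*n$ in $(\PM \tens \id_\AM)\PM(n)$ yields the four formulas \eqref{Delta}. The counit $\eM$ corresponds to the identity quantum family labelled by $\CC$, and the requirement $(\id_{M_2} \tens \eM)\PM(n) = n$ immediately gives $\eM(\mbeta) = 1$ and $\eM(\malpha) = \eM(\mgamma) = \eM(\mdelta) = 0$.

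The conceptual content is concentrated in the one-generator presentation of $M_2$; the rest is routine but lengthy bookkeeping with the multiplication table of the chosen basis. I expect the main obstacle to be merely organisational: carrying out the computation of $\DM$ on each of the four generators without sign or order slips, and verifying that the off-diagonal part of $xx^* + x^*x$ really collapses to a single independent relation rather than two.
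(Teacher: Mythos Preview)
Your proposal is correct and follows essentially the same route as the paper: both use the one-generator presentation of $M_2$ by $n$ with $n^2=0$, $nn^*+n^*n=\I$, expand $\PM(n)$ in the basis $\{nn^*,n,n^*,n^*n\}$ to read off the relations \eqref{nstn}--\eqref{nkw}, and obtain $\DM$ and $\eM$ from the identities $(\PM\tens\id_\AM)\comp\PM=(\id_{M_2}\tens\DM)\comp\PM$ and $(\id_{M_2}\tens\eM)\comp\PM=\id_{M_2}$. Your treatment is in fact slightly more explicit in justifying the existence of the universal $\cst$-algebra via the norm bound and in noting that the off-diagonal part of $xx^*+x^*x=\I$ gives a single independent relation.
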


\begin{proof}
The $\cst$-algebra $M_2$ is the universal $\cst$-algebra generated by an element $n$ satisfying the relations
\[
n^2=0\qquad\text{and}\qquad{nn^*+n^*n=\I}.
\]
One can take 
\begin{equation}\label{n}
n=\begin{bmatrix}0&1\\0&0\end{bmatrix}
\end{equation}
and we will adopt this choice. We know that the $\cst$-algebra $\AM$ exists and that it is endowed with a morphism $\PM\in\Mor(M_2,M_2\tens\AM)$ which is the quantum family of all maps (\cite[Definition 3.1(2)]{qs}) 
$\qs(M_2)\to\qs(M_2)$. This map $\PM$ has some value on the generator $n$ of $M_2$. Since $\{nn^*,n,n^*,n^*n\}$ is a basis of $M_2$ we can write $\PM(n)$ in the form \eqref{Phibaza} or as
\[
\PM:M_2\ni{n}\longmapsto\begin{bmatrix}\malpha&\mbeta\\\mgamma&\mdelta\end{bmatrix}
\in{M_2(\AM)=M_2\tens\AM},
\]
where $\malpha,\mbeta,\mgamma$ and $\mdelta$ are some elements of $\AM$. The equalities
\begin{equation*}
\PM(n^*n)=\PM(n)^*\PM(n)=\begin{bmatrix}\malpha&\mbeta\\\mgamma&\mdelta\end{bmatrix}^*
\begin{bmatrix}\malpha&\mbeta\\\mgamma&\mdelta\end{bmatrix}
=\begin{bmatrix}\malpha^*&\mgamma^*\\\mbeta^*&\mdelta^*\end{bmatrix}
\begin{bmatrix}\malpha&\mbeta\\\mgamma&\mdelta\end{bmatrix}
=\begin{bmatrix}\malpha^*\malpha+\mgamma^*\mgamma&\malpha^*\mbeta+\mgamma^*\mdelta\\
\mbeta^*\malpha+\mdelta^*\mgamma&\mbeta^*\mbeta+\mdelta^*\mdelta\end{bmatrix}
\end{equation*}
and
\begin{equation*}
\PM(nn^*)=\PM(n)\PM(n)^*=\begin{bmatrix}\malpha&\mbeta\\\mgamma&\mdelta\end{bmatrix}
\begin{bmatrix}\malpha&\mbeta\\\mgamma&\mdelta\end{bmatrix}^*
=\begin{bmatrix}\malpha&\mbeta\\\mgamma&\mdelta\end{bmatrix}
\begin{bmatrix}\malpha^*&\mgamma^*\\\mbeta^*&\mdelta^*\end{bmatrix}
=\begin{bmatrix}\malpha\malpha^*+\mbeta\mbeta^*&\malpha\mgamma^*+\mbeta\mdelta^*\\
\mgamma\malpha^*+\mdelta\mbeta^*&\mgamma\mgamma^*+\mdelta\mdelta^*\end{bmatrix}
\end{equation*}
show that the relation $nn^*+n^*n=\I_{M_2}$ implies that $\malpha,\mbeta,\mgamma$ and $\mdelta$ must satisfy \eqref{nstn}. Similarly, from
\begin{equation*}
\PM(n^2)=\PM(n)^2=\begin{bmatrix}\malpha&\mbeta\\\mgamma&\mdelta\end{bmatrix}
\begin{bmatrix}\malpha&\mbeta\\\mgamma&\mdelta\end{bmatrix}=
\begin{bmatrix}\malpha^2+\mbeta\mgamma&\malpha\mbeta+\mbeta\mdelta\\
\mgamma\malpha+\mdelta\mgamma&\mgamma\mbeta+\mdelta^2\end{bmatrix}
\end{equation*}
and the fact that $n^2=0$, it follows that relations \eqref{nkw} must be satisfied. 

Now for any $\cst$-algebra $B$ and any $\Psi\in\Mor(M_2,M_2\tens{B})$ the matrix elements of $\Psi(n)$ must satisfy the same relations as $\malpha,\mbeta,\mgamma$ and $\mdelta$. Therefore the universal $\cst$-algebra generated by $\malpha,\mbeta,\mgamma$ and $\mdelta$ with relations \eqref{nstn} and \eqref{nkw}\footnote{It follows from the form of considered relations that such a $\cst$-algebra exists} will always have a unique map $\Lambda$ onto $B$ such that $\Psi=(\id_{M_2}\tens\Lambda)\PM$. It easily follows that $\AM$ \emph{is} the universal $\cst$-algebra described in the statement of the theorem.

We know that $\AM$ possesses a comultiplication $\DM$ and that 
\begin{equation*}
(\PM\tens\id_\AM)\comp\PM=(\id_{M_2}\tens\DM)\comp\PM.
\end{equation*}
Applying both sides of this relation to $n\in{M_2}$ we obtain
\begin{equation}
\begin{split}
\begin{bmatrix}\DM(\malpha)&\DM(\mbeta)\\\DM(\mgamma)&\DM(\mdelta)\end{bmatrix}
&=(\id_{M_2}\tens\DM)\biggl(\begin{bmatrix}\malpha&\mbeta\\\mgamma&\mdelta\end{bmatrix}\biggr)
=(\id_{M_2}\tens\DM)\PM(n)=(\PM\tens\id_\AM)\PM(n)\\
&=(\PM\tens\id_\AM)
\bigl(nn^*\tens\malpha+n\tens\mbeta+n^*\tens\mgamma+n^*n\tens\mdelta\bigr)\\
&=\begin{bmatrix}(\malpha\malpha^*+\mbeta\mbeta^*)\tens\malpha
&(\malpha\mgamma^*+\mbeta\mdelta^*)\tens\malpha\\
(\mgamma\malpha^*+\mdelta\mbeta^*)\tens\malpha
&(\mgamma\mgamma^*+\mdelta\mdelta^*)\tens\malpha\end{bmatrix}+
\begin{bmatrix}\malpha\tens\mbeta&\mbeta\tens\mbeta\\
\mgamma\tens\mbeta&\mdelta\tens\mbeta\end{bmatrix}\\
&\quad+
\begin{bmatrix}\malpha^*\tens\mgamma&\mgamma^*\tens\mgamma\\
\mbeta^*\tens\mgamma&\mdelta^*\tens\mgamma\end{bmatrix}+
\begin{bmatrix}(\malpha^*\malpha+\mgamma^*\mgamma)\tens\mdelta
&(\malpha^*\mbeta+\mgamma^*\mdelta)\tens\mdelta\\
(\mbeta^*\malpha+\mdelta^*\mgamma)\tens\mdelta
&(\mbeta^*\mbeta+\mdelta^*\mdelta)\tens\mdelta\end{bmatrix}
\end{split}
\end{equation}
and we immediately find the values of $\DM$ on generators as described in \eqref{Delta}.

Finally the values of the counit $\eM$ is determined by the fact that $(\id_{M_2}\tens\eM)\PM(m)=m$ for all $m\in{M_2}$. In other words we have
\begin{equation*}
\begin{bmatrix}\eM(\malpha)&\eM(\mbeta)\\\eM(\mgamma)&\eM(\mdelta)\end{bmatrix}=
(\id_{M_2}\tens\eM)\PM(n)=n=\begin{bmatrix}0&1\\0&0\end{bmatrix}.
\end{equation*}
\end{proof}

As in Section \ref{Xn} we shall study some simple examples of quantum commutants of families of maps $\qs(M_2)\to\qs(M_2)$. The simplest possible such family is a classical family consisting of a single automorphism of $M_2$.

Let $\phi\in\Aut(M_2)$. The singleton family $\{\phi\}$ can be described in the non-commutative framework by taking $B=\CC$ and $\Psi_B:M_2\ni{m}\mapsto\phi(m)\tens{1}\in{M_2\tens{B}}$.
Now the quantum commutant of $\Psi_B$ (or in other words of $\{\phi\}$) is $(A,\Delta)$, where $A$ is the quotient of $\AM$ by the ideal \eqref{K}. In the special case we are describing one finds that this ideal is generated by
\begin{equation}\label{K2}
\Bigl\{(\omega\tens\id_\AM)\bigl(\PM(\phi(m)\bigr)-\bigl([\omega\comp\phi]\tens\id_\AM)\PM(m)\st
m\in{M_2}\;\omega\in{M_2^*}\Bigr\}.
\end{equation}
Indeed, for $m\in{M_2}$, $\eta\in{B^*}$ and $\omega\in{M_2^*}$ we have
\[
\begin{split}
(\omega\tens\id_\AM\tens\eta)(\PM\tens\id_B)\Psi_B(m)-
(\omega\tens\eta\tens\id_\AM)(\Psi_B\tens\id_\AM)\PM(m)\\
\qquad=\eta(\omega\tens\id_\AM)\PM\bigl(\phi(m)\bigr)
-\eta(\omega\tens\id_\AM)(\phi\tens\id_\AM)\PM(m).
\end{split}
\]
We can get rid of $\eta$ since it is simply a complex number and we find that the ideal \eqref{K} is generated by \eqref{K2}.

\begin{prop}\label{comalpha}
Let $\phi$ be the automorphism of $M_2$ which sends the matrix \eqref{n} to its conjugate. Then $\QMap_\phi\bigl(\qs(M_2)\bigr)=(A,\Delta)$ where $A$ is the universal $\cst$-algebra generated by three elements $\alpha,\beta$ and $\gamma$ satisfying relations 
\begin{equation}\label{comrel0}
\beta=\beta^*,\quad\gamma=\gamma^*
\end{equation}
and
\begin{subequations}\label{comrel}
\begin{align}
\alpha^*\alpha+\gamma^2+\alpha\alpha^*+\beta^2&=\I,\label{comrel1}\\
\alpha^*\beta+\gamma\alpha^*+\alpha\gamma+\beta\alpha&=0,\label{comrel2}\\
\alpha^2+\beta\gamma&=0\label{comrel3},\\
\alpha\beta+\beta\alpha^*&=0,\label{comrel4}\\
\gamma\alpha+\alpha^*\gamma&=0.\label{comrel5}
\end{align}
\end{subequations}
The comultiplication acts on generators in the following way:
\begin{equation}\label{comdel}
\begin{split}
\Delta(\alpha)&=\I_A\tens\alpha+(\alpha^*\alpha+\gamma^2)\tens(\alpha^*-\alpha)+
\alpha\tens\beta+\alpha^*\tens\gamma,\\
\Delta(\beta)&=(\alpha\gamma+\beta\alpha)\tens(\alpha-\alpha^*)+\beta\tens\beta+\gamma\tens\gamma,\\
\Delta(\gamma)&=(\beta\alpha+\alpha\gamma)\tens(\alpha^*-\alpha)+\gamma\tens\beta+\beta\tens\gamma,
\end{split}
\end{equation}
while the counit {$\varepsilon$} is
{\[
\varepsilon(\alpha)=\varepsilon(\gamma)=0,\quad{\varepsilon(\beta)=1}.
\]}
\end{prop}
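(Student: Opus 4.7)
The plan is to compute the quotient $A = \AM/\cJ$ by explicitly identifying the generators of $\cJ$, translating the defining relations of $\AM$ to the quotient, and then pushing forward $\DM$ and $\eM$ via the quotient map $\pi$.

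First I would compute the generators of $\cJ$ from the set \eqref{K2}. With $\phi(n)=n^*$, so that $\phi(nn^*)=n^*n$, $\phi(n^*n)=nn^*$ and $\phi(n^*)=n$, I apply formula \eqref{Phibaza} to write
\[
\PM\bigl(\phi(n)\bigr)=\PM(n^*)=nn^*\tens\malpha^*+n\tens\mgamma^*+n^*\tens\mbeta^*+n^*n\tens\mdelta^*,
\]
while $(\phi\tens\id_\AM)\PM(n)=n^*n\tens\malpha+n^*\tens\mbeta+n\tens\mgamma+nn^*\tens\mdelta$. Taking the difference and pairing with the basis of $M_2^*$ dual to $\{nn^*,n,n^*,n^*n\}$ yields the four elements
\[
\malpha^*-\mdelta,\qquad \mgamma^*-\mgamma,\qquad \mbeta^*-\mbeta,\qquad\mdelta^*-\malpha.
\]
(One checks that $m=n^*$ yields no new relations since $\phi^2=\id$.) Therefore in $A$ the images of $\mbeta$ and $\mgamma$ are self-adjoint and $\pi(\mdelta)=\pi(\malpha)^*$. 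Setting $\alpha=\pi(\malpha)$, $\beta=\pi(\mbeta)$, $\gamma=\pi(\mgamma)$ gives exactly \eqref{comrel0}.

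Next I would substitute $\mdelta\mapsto\alpha^*$, $\mbeta\mapsto\beta$, $\mgamma\mapsto\gamma$ into the relations \eqref{nstn} and \eqref{nkw}. The first and third equations of \eqref{nstn} both collapse to \eqref{comrel1}, the second gives \eqref{comrel2}; the four equations of \eqref{nkw} yield \eqref{comrel3}, \eqref{comrel4}, \eqref{comrel5}, and the redundant relation $\gamma\beta+\alpha^{*2}=0$, which is simply the adjoint of \eqref{comrel3}. By the universal property of $\AM$ (Proposition \ref{factsA}) any $\cst$-algebra receiving $\alpha,\beta,\gamma$ satisfying \eqref{comrel0}--\eqref{comrel} automatically receives a morphism from $\AM$ killing the generators of $\cJ$, hence a morphism from $A$; this identifies $A$ with the universal $\cst$-algebra described in the statement.

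For the comultiplication I would apply $\pi\tens\pi$ to the formulas \eqref{Delta}. This immediately yields, for instance,
\[
\Delta(\alpha)=(\alpha\alpha^*+\beta^2)\tens\alpha+\alpha\tens\beta+\alpha^*\tens\gamma+(\alpha^*\alpha+\gamma^2)\tens\alpha^*,
\]
and the one tiny cosmetic step is to rewrite $\alpha\alpha^*+\beta^2=\I-(\alpha^*\alpha+\gamma^2)$ using \eqref{comrel1}, which turns the above into the first line of \eqref{comdel}. The formulas for $\Delta(\beta)$ and $\Delta(\gamma)$ follow in the same routine way, after collecting the terms $(\mgamma\malpha^*+\mdelta\mbeta^*)\mapsto\gamma\alpha^*+\alpha^*\beta=-(\beta\alpha+\alpha\gamma)$ (using \eqref{comrel2}), and similarly for $\Delta(\gamma)$. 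Finally the formulas for $\varepsilon$ follow from $\varepsilon\comp\pi=\eM$ (equation \eqref{epie}) applied to the three generators.

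The whole argument is essentially mechanical; the only step that is not pure substitution is the simplification of $\Delta$ into the compact form \eqref{comdel}, which is where the relation \eqref{comrel1} (and its rewriting of $\alpha\alpha^*+\beta^2$) and the relation \eqref{comrel2} (to merge $\gamma\alpha^*+\alpha^*\beta$ with $\alpha\gamma+\beta\alpha$ up to a sign) must be used. I expect no substantial obstacle beyond bookkeeping.
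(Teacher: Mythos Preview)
Your proposal is correct and follows essentially the same route as the paper: compute the generators of $\cJ$ from \eqref{K2} to obtain $\mdelta=\malpha^*$, $\mbeta=\mbeta^*$, $\mgamma=\mgamma^*$ in the quotient; substitute into \eqref{nstn}--\eqref{nkw} to get \eqref{comrel}; push $\DM$ and $\eM$ through $\pi$ and simplify with \eqref{comrel1}--\eqref{comrel2} to get \eqref{comdel} and the counit. Your universality argument (any $\cst$-algebra with elements satisfying \eqref{comrel0}--\eqref{comrel} receives a map from $\AM$ killing $\cJ$) is exactly the paper's argument that the universal $\widetilde{A}$ carries a quantum family $\widetilde{\Phi}$ commuting with $\{\phi\}$, just phrased dually.
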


\begin{proof}
Let $\alpha,\beta,\gamma$ and $\delta$ be images in $A$ of the generators $\malpha,\mbeta,\mgamma$ and $\mdelta$ of $\AM$. For the matrix $n$ given by \eqref{n} we have
\[
(\alpha\tens\id_\AM)\PM(n)-\PM\bigl(\alpha(n)\bigr)=
\begin{bmatrix}\mdelta&\mgamma\\\mbeta&\malpha\end{bmatrix}-
\begin{bmatrix}\malpha^*&\mgamma^*\\\mbeta^*&\mdelta^*\end{bmatrix}=
\begin{bmatrix}\mdelta-\malpha^*&\mgamma-\mgamma^*\\\mbeta-\mbeta^*&\malpha-\mdelta^*.\end{bmatrix}
\]
Since the image under $\pi$ of this matrix must be sent to $0$ by $\omega\tens\id_A$ for all $\omega\in{M_2^*}$ we see that the images $\alpha,\beta,\gamma$ and $\delta$ must satisfy
\begin{equation}\label{first}
\alpha=\delta^*,\quad\gamma=\gamma^*\quad\beta=\beta^*.
\end{equation}
The remaining relations \eqref{comrel} follow then directly from \eqref{nstn} and \eqref{nkw}.
Applying $\pi\tens\pi$ to both sides of \eqref{Delta} and using \eqref{first} yields at first
\[
\begin{split}
\Delta(\alpha)&=\alpha\alpha^*\tens\alpha+\beta^2\tens\alpha
+\alpha\tens\beta+\alpha^*\tens\gamma+\alpha^*\alpha\tens\alpha^*+\gamma^2\tens\alpha^*,\\
\Delta(\beta)&=\alpha\gamma\tens\alpha+\beta\alpha\tens\alpha
+\beta\tens\beta+\gamma\tens\gamma+\alpha^*\beta\tens\alpha^*+\gamma\alpha^*\tens\alpha^*,\\
\Delta(\gamma)&=\gamma\alpha^*\tens\alpha+\alpha^*\beta\tens\alpha
+\gamma\tens\beta
+\beta\tens\gamma
+\beta\alpha\tens\alpha^*+\alpha\gamma\tens\alpha^*.\\
\end{split}
\]
Then using \eqref{comrel1} \eqref{comrel2} we obtain \eqref{comdel} {and the formula for $\varepsilon$ follows from \eqref{epie}.}

So far we have not shown that $A$ is the \emph{universal} $\cst$-algebra generated by $\alpha,\beta$ and $\gamma$ satisfying \eqref{comrel0} and \eqref{comrel}. A priori some additional relations (not following from \eqref{comrel0} and \eqref{comrel}) could be satisfied in $A$. However if $\widetilde{A}$ is the universal $\cst$-algebra for considered relations then $\widetilde{\Phi}\in\Mor\bigl(M_2,M_2\tens\widetilde{A}\bigr)$ defined by
\[
\widetilde{\Phi}(n)=
\begin{bmatrix}
\alpha&\beta\\\gamma&\alpha^*
\end{bmatrix}
\]
is a quantum family commuting with $\{\phi\}$. Therefore $A=\widetilde{A}$.
\end{proof}

\begin{rem}\label{remT}
One can easily determine the \emph{classical} commutant of $\{\phi\}$. The only classical maps $\qs(M_2)\to\qs(M_2)$ {are} simply the automorphisms of $M_2$. Out of these the only ones commuting with $\phi$ are given by conjugation by matrices from the set
\[
\biggl\{
\begin{bmatrix}
a&ib\\ib&a
\end{bmatrix}
\st{a,b\in\RR,}\;a^2+b^2=1
\biggr\}\cup
\biggl\{
\begin{bmatrix}
ia&-b\\b&-ia
\end{bmatrix}
\st{b,-ia\in\RR,}\;a^2+b^2=1
\biggr\}.
\]
Therefore the classical commutant of $\phi$ is isomorphic to the group $\TT\rtimes\ZZ_2$. In particular the spectrum of $A$ must be equal to a disjoint union of two circles (cf.~\cite[Theorem 4.4 \& Corollary 4.5]{qs}).
\end{rem}

In the next corollary we will use some known facts about compact quantum groups. If $G=(B,\Delta_B)$ is such a group then the left kernel of its Haar measure $h_G$ is in fact a two sided ideal and the quotient $B_r$ of $B$ by this ideal has a canonical structure of a compact quantum group. The object $G_r=(B_r,{\Delta_B}_r)$ is called the \emph{reduced version} of $G$. There is a canonical Hopf $*$-algebra $\cB$ dense in $B$. It is the same for $G_r$ and the quotient map $B\to{B_r}$ is an identity on $\cB$. For all these results we refer the reader to \cite[page 656]{pseudogr} and \cite{cqg}.

\begin{cor}\label{corXY}
Let $\alpha$ be the automorphism of $M_2$ considered in Proposition \ref{comalpha}. Then the quantum semigroup $\QMap_{\phi}\bigl(\qs(M_2)\bigr)$ is not a quantum group.
\end{cor}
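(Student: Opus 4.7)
The plan is to assume for contradiction that $(A,\Delta)$ is a compact quantum group and then play its (hypothetical) Haar state against an explicit $*$-representation of $A$ into $M_2$. If $(A,\Delta)$ were a compact quantum group, it would carry a Haar state $h$, and the right invariance identity $(h\tens\id_A)\Delta(a)=h(a)\I_A$ would hold in $A$ for every $a$. Specialising to $a=\alpha$ and inserting the formula \eqref{comdel} would give the equation
\[
\alpha+c(\alpha^*-\alpha)+h(\alpha)\beta+\overline{h(\alpha)}\gamma=h(\alpha)\I_A,\qquad c:=h(\alpha^*\alpha+\gamma^2)\in\RR,
\]
valid in $A$.

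The next step is to produce a concrete $*$-representation $\rho\colon A\to M_2$ that this identity cannot survive. A short inspection of the relations \eqref{comrel0} and \eqref{comrel1}--\eqref{comrel5} suggests the two-dimensional assignment
\[
\rho(\alpha)=\tfrac{1}{2}\begin{bmatrix}0&1\\1&0\end{bmatrix},\qquad
\rho(\beta)=\tfrac{1}{2}\begin{bmatrix}1&0\\0&-1\end{bmatrix},\qquad
\rho(\gamma)=-\tfrac{1}{2}\begin{bmatrix}1&0\\0&-1\end{bmatrix}.
\]
Every defining relation of $A$ reduces to an elementary identity among the Pauli matrices $\sigma_x,\sigma_z$ (only $\sigma_x^2=\sigma_z^2=\I$ and $\sigma_x\sigma_z+\sigma_z\sigma_x=0$ are used), so by the universal property of $A$ recorded in Proposition \ref{comalpha} this assignment extends uniquely to a $*$-homomorphism.

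Pushing the Haar identity through $\rho$, the term $c(\alpha^*-\alpha)$ disappears since $\rho(\alpha)$ is self-adjoint, and the identity becomes a matrix equation whose left-hand side carries a non-zero $\sigma_x$-component while its right-hand side is a scalar matrix. Taking the trace of both sides immediately forces $h(\alpha)=0$, whereupon the equation collapses to $\tfrac{1}{2}\sigma_x=0$. This contradiction shows that $(A,\Delta)$ cannot be a compact quantum group.

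The only non-routine step is spotting the representation $\rho$; once it is in hand, everything else is dictated by the explicit shape of $\Delta(\alpha)$ in \eqref{comdel} and by taking a trace in $M_2$.
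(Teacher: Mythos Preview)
Your argument is correct and considerably shorter than the paper's. The representation $\rho$ really does satisfy \eqref{comrel0}--\eqref{comrel5} (each relation reduces to $\sigma_x^2=\sigma_z^2=\I$ or $\sigma_x\sigma_z+\sigma_z\sigma_x=0$), so by the universality statement in Proposition~\ref{comalpha} it extends to a unital $*$-homomorphism $A\to M_2$. Applying $\rho$ to the Haar identity $(h\tens\id_A)\Delta(\alpha)=h(\alpha)\I_A$ and reading off the $\sigma_x$-component gives the contradiction exactly as you say.

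The paper proceeds very differently. It introduces $X=\alpha+\alpha^*$ and $Y=\beta+\gamma$, observes that $\Delta(Y)=Y\tens Y$, and then argues that under the Haar-state hypothesis $Y^2$ must be a \emph{proper} group-like projection. Excluding $Y^2=\I_A$ and $Y^2=0$ requires separate ad hoc arguments (one using the classical commutant computed in Remark~\ref{remT}, the other the counit), and the final contradiction has to pass through the reduced quantum group $A_r$ and Woronowicz's theorem that group-like elements lie in the dense Hopf $*$-algebra, because a proper group-like projection is not in itself forbidden when the Haar state may fail to be faithful. Your route sidesteps all of these subtleties: the identity $(h\tens\id_A)\Delta(\alpha)=h(\alpha)\I_A$ already holds in $A$ itself, so pushing it through \emph{any} representation is legitimate regardless of faithfulness of $h$, and the two-dimensional $\rho$ kills it immediately. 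What the paper's longer argument buys is some structural information (the existence of the group-like element $Y$ and the projection $Y^2$) that may be of independent interest, but as a proof of the corollary your approach is both more elementary and more robust.
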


{The proof of corollary \ref{corXY} will be achieved by assuming that  $\QMap_{\phi}\bigl(\qs(M_2)\bigr)=(A,\Delta)$ is a compact quantum group and showing that the dense Hopf $*$-algebra $\cA\subset{A}$ must then contain a proper group-like projection, which is, of course, impossible. However the author is not aware of a result saying that a proper group like projection cannot be contained by $A$ (unless the Haar measure is faithful). We show that the considered projection is proper by proving that certain relations are not satisfied in $A$.}

\begin{proof}[{Proof of corollary \ref{corXY}}]
Let $\alpha,\beta$ and $\gamma$ be generators of $A$ as in Proposition \ref{comalpha} and let 
$X=\alpha+\alpha^*$, $Y=(\beta+\gamma)$. We have $X=X^*$, $Y=Y^*$. Moreover 
\[
\begin{split}
XY&=(\alpha+\alpha^*)(\beta+\gamma)=\alpha\beta+\alpha\gamma+\alpha^*\beta+\alpha^*\gamma,\\
YX&=(\beta+\gamma)(\alpha+\alpha^*)=\beta\alpha+\beta\alpha^*+\gamma\alpha+\gamma\alpha^*,
\end{split}
\]
so $XY+YX=0$ by \eqref{comrel2}, \eqref{comrel4} and \eqref{comrel5}. We will now assume that $(A,\Delta)$ is a compact quantum group and {shall} arrive at a contradiction. 

Therefore let $h$ be the Haar measure of $(A,\Delta)$. From the formulas for $\Delta$ we find that
\begin{subequations}\label{Delrel}
\begin{align}
\Delta(X)&=\I_A\tens{X}+X\tens{Y},\label{Delrel1}\\
\Delta(Y)&=Y\tens{Y},\label{Delrel2}
\end{align}
\end{subequations}
so by applying $(h\tens\id_A)\Delta$ to $X$ we obtain
\begin{equation}\label{klucz}
X+h(X)Y=\I_A.
\end{equation}
If $h(X)=0$ then $X=\I_A$, but $h(X)=h(\I_A)$ must be equal to $1$. Therefore $h(X)\neq{0}$. But then \eqref{klucz} shows that $X$ commutes with $Y$. Since they also anticommute, we see that
\[
XY=0.
\]
Now note, that 
\[
X^2+Y^2=\alpha^2+\alpha^*\alpha+\alpha\alpha^*+{\alpha^*}^2+\beta^2+\beta\gamma+\gamma\beta+\gamma^2
=\I_A
\]
by \eqref{comrel1}, \eqref{comrel3} and its adjoint version.

Therefore
\[
Y=\I_A{Y}=(X^2+Y^2)Y=X(XY)+Y^3=Y^3,
\]
so that $Y^2$ is a projection. By \eqref{Delrel2} the element $Y^2$ is also group-like. We must show that this is impossible, so we must first exclude the possibilities
\begin{enumerate}
\item\label{pierwsza} $Y^2=\I_A$,
\item\label{druga} $Y^2=0$.
\end{enumerate}
Assume that \eqref{pierwsza} holds. Then $X^2=0$, so $X=0$ and $\alpha=-\alpha^*$. Substituting this into \eqref{comrel} shows that the $\cst$-algebra $A$ is commutative, generated by three self-adjoint elements $\beta,\gamma$ and $t=i\alpha$ such that
\[
2t^2+\beta^2+\gamma^2=\I_A,\quad\text{and}\quad{t^2}=\beta\gamma.
\]
Therefore {$A=\C(Z)$}, where {$Z$} is contained in the subset of $\RR^2$ consisting of points $\left[\begin{smallmatrix}\xi\\\eta\end{smallmatrix}\right]$ such that $(\xi+\eta)=1$ and $\xi\eta\geq{0}$.\footnote{We do not investigate this issue here, but if there are any more relations between $\beta,\gamma$ and $t$, then {$Z$} is a proper subset of 
$\bigl\{\left[\begin{smallmatrix}\xi\\\eta\end{smallmatrix}\right]\st(\xi+\eta)=1,
\;\xi\eta\geq{0}\bigr\}$}
But in this case the spectrum of $A$ is too small to contain the classical commutant of $\{\phi\}$ which is homeomorphic to two disjoint circles (see Remark \ref{remT}).

The possibility \eqref{druga} is eliminated by noticing that $\beta=-\gamma$ contradicts the fact that
{\[
\varepsilon(\gamma)=0,\quad{\varepsilon}(\beta)=1,
\]}
(where {$\varepsilon$} is the counit of $(A,\Delta)$).

Thus we have established that $Y^2$ is a proper projection and a group-like element. Unlike in the theory of Hopf algebras, this fact does not, in principle, disqualify $(A,\Delta)$ as a compact quantum group. However, by \cite[Theorem 2.6(2)]{cqg} we know that if $h$ were faithful then $Y$ would have to belong to the dense Hopf $*$-algebra $\cA$ inside $A$ which is impossible. Therefore $Y$ must be sent to $0$ by the reducing map $A\to{A_r}$. Therefore in $A_r$ we have $Y=0$. We want to show that this is impossible. We cannot use the argument with counit, because $A_r$ might not have a counit. Therefore we must argue otherwise. If $Y=0$ then not only $X^2=\I_{A_r}$, but also $X=\I_{A_r}$ because of \eqref{klucz} (or \eqref{Delrel1}). Therefore $A_r$ is generated by two elements $\alpha$ and $\beta$ satisfying
\begin{subequations}
\begin{align}
\beta&=\beta^*,\label{ost1}\\
\alpha+\alpha^*&=\I_{A_r},\label{ost2}\\
\alpha^*\alpha+2\beta^2+\alpha\alpha^*&=\I_{A_r},\label{ost3}\\
\alpha^2&=\beta^2,\label{ost4}\\
\alpha\beta+\beta\alpha^*&=0,\nonumber\\
\beta\alpha+\alpha^*\beta&=0.\nonumber
\end{align}
\end{subequations}
Note first that it follows from \eqref{ost2} that $\alpha$ commutes with $\alpha^*$. Thus \eqref{ost3} can be rewritten as
\begin{equation}\label{ab}
\alpha^*\alpha=\tfrac{1}{2}(\I_{A_r}-2\beta^2).
\end{equation}
Moreover, from \eqref{ost2} we also get
\[
\alpha=\I_{A_r}\alpha=(\alpha+\alpha^*)\alpha=\alpha^2+\alpha^*\alpha
\]
so by \eqref{ost4} and \eqref{ab} we have
\[
\alpha=\beta^2+\tfrac{1}{2}(\I_{A_r}-2\beta^2).
\]
Therefore $\alpha$ commutes with $\beta$ and by \eqref{ost1} it is self-adjoint. It follows that $A_r$ is commutative. 

However, if $A_r$ is commutative then $A_r=A$ and, in particular, $A_r$ does have a continuous counit. All this shows that $Y^2$ cannot be mapped to zero by the epimorphism $A\to{A_r}$. It is therefore a non zero element of $A_r$ and by \cite[Theorem 2.6(2)]{cqg} an element of the dense Hopf $*$-algebra $\cA$ associated to $(A,\Delta)$. This, on the other hand, is impossible since $Y^2$ is proper projection and a group-like element.
\end{proof}

\end{document}